\documentclass{article}

\usepackage{amsthm}
\usepackage{amssymb}
\usepackage{amsmath}

\usepackage{a4wide}


\newtheorem{theorem}{Theorem}[section]
\newtheorem{lemma}[theorem]{Lemma}
\theoremstyle{remark}
\newtheorem{remark}{Remark}[section]
\theoremstyle{definition}
\newtheorem{definition}{Definition}[section]


\begin{document}

\title{Global Existence of Solutions for a Fractional Caputo 
Nonlocal Thermistor Problem\thanks{This is a preprint of a paper 
whose final and definite form is with 'Advances in Difference Equations', 
ISSN 1687-1847, available at [{\tt https://advancesindifferenceequations.springeropen.com}].
Submitted 21/July/2017; Revised 21/Oct/2017; Accepted for publication 31/Oct/2017.}}

\author{Moulay Rchid Sidi Ammi and Ismail Jamiai\\[0.15cm]
Department of Mathematics, AMNEA Group\\
Faculty of Sciences and Techniques\\
Moulay Ismail University\\
B.P. 509, Errachidia, Morocco\\
\texttt{sidiammi@ua.pt, i.jamiai@gmail.com} \\[0.3cm]
\and
Delfim F. M. Torres\\[0.15cm]
Department of Mathematics, R\&D Unit CIDMA\\
University of Aveiro, 3810-193 Aveiro, Portugal\\
\texttt{delfim@ua.pt}}

\date{}

\maketitle


\begin{abstract}
We begin by proving a local existence result for a fractional 
Caputo nonlocal thermistor problem. Then, additional existence 
and continuation theorems are obtained, ensuring global existence 
of solutions.  
\end{abstract}


\smallskip

\textbf{Mathematics Subject Classification 2010:} 26A33, 35A01, 58J20.

\smallskip


\smallskip

\textbf{Keywords:} fractional derivatives; fractional PDE; local existence; global existence; fixed point theorem.

\medskip


\section{Introduction}

Fractional calculus is acknowledged as an important research tool 
that opens up many horizons in the field of dynamical systems 
\cite{book:frac:ICP2}. According to Professor Katsuyuki Nishimoto, 
``the fractional calculus is the calculus of the XXIst century'' \cite{MR1101141}. 
This opinion is strengthen by a huge increase of interest in this research tool, 
expressed by an increase in the number of theoretical developments 
and basic theory on this subject; see, e.g., \cite{11,17,29,28,29bis}. 
Recently, it has also been proved that fractional 
differential equations are significant and essential tools when 
applied in the study of nonlocal or time-dependent processes 
and in the modeling of many applications, including chaotic dynamics, 
material sciences, mechanic of fractal and complex media, 
quantum mechanics, physical kinetics, chemistry, biology, 
economics and control theory \cite{MR2736622}. 
For instance, a fractional generalization 
of the Newtonian equation to describe the dynamics 
of complex phenomena, in both science and engineering, 
has been proposed in \cite{Ref:1:2};
a fractional Langevin equation, with applications 
in polymer layers, has been investigated in \cite{Ref:1:1}.
One can say that real-world problems require definitions of fractional 
derivatives for initial and boundary value problems
\cite{MR3561379,MR2768178}. Fractional mathematical models 
describing natural phenomena, like shallow water waves and ion acoustic 
waves in plasma and vibration of large membranes, 
as well as personal and interpersonal realities,
like smoking, romantic relationships and marriages,
can be found in \cite{Ref:2:2,Ref:2:4}
and \cite{Ref:2:1,Ref:2:3}, respectively.
Details on the geometric and physical interpretation of 
fractional differentiation can be found in \cite{28}.

Thermistor is a thermo-electric device constructed  
from a ceramic material whose electrical conductivity 
depends strongly on the temperature. This makes thermistor 
problems highly nonlinear \cite{MathCompSim}. 
They can be used as a switching device in many electronic circuits. 
A broad application spectrum of thermistor problems
in heating processes and current flow can be found in several areas 
of electronics and its related industries \cite{MyID:211}. Generally, 
there are two kinds of thermistors: the first have an electrical conductivity that
decreases with the increasing of temperature; the second have an 
electrical conductivity that increases with the increasing of temperature 
\cite{kwok,maclen}. Here we consider a prototype of electrical conductivity 
that depends strongly in both time and temperature. Our goal consists 
to prove global existence of solutions for a fractional Caputo nonlocal 
thermistor problem. The results are obtained via Schauder's fixed point theorem.
Precisely, we consider the following fractional order initial value problem: 
\begin{equation}
\label{1}
\begin{gathered}
_{C}D^{2 \alpha}_{0, t} u(t) = \frac{\lambda f(t, u(t))}{(
\int_{0}^{t}f(x, u(x))\, dx)^{2}} \, , \quad  t \in  (0, \infty)  \, , \\
u(t)|_{t=0}=u_0,
\end{gathered}   
\end{equation}
where $ _{C}D^{2 \alpha}_{0, t}$ is the fractional Caputo derivative 
operator of order $2 \alpha$ with $0 < \alpha < \frac{1}{2}$ a real parameter. 
Function $u$ denotes the temperature and $\lambda$ is a positive real.
We shall assume the following hypotheses:
\begin{itemize}
\item[$(H_1)$] $f: \mathbb{R}^{+} \times \mathbb{R}^{+}\rightarrow \mathbb{R}^{+}$ 
is a Lipschitz continuous function with Lipschitz constant $L_{f}$ with respect 
to the second variable such that $c_{1} \leq f(s,u) \leq c_{2}$
with $c_{1}$ and $c_{2}$ two positive constants;

\item[$(H_2)$] there exists a positive constant $M$ such that $f(s, u) \leq M s^{2}$;

\item[$(H_3)$]  $|f(s, u)- f(s, v) | \leq  s^{2} |u-v|$ or, in a more general manner, 
there exists a constant $\omega \geq 2$ such that $|f(s,u)- f(s,v) | \leq  s^{\omega} |u-v|$.
\end{itemize}

In the literature, questions evolving existence and uniqueness of solution 
for fractional differential equations (FDEs) have been intensely studied 
by many mathematicians \cite{16,17,22,29}. However, much of published papers 
have been concerned with existence-uniqueness of solutions for FDEs 
on a finite interval. Since continuation theorems for FDEs are not well developed, 
results about global existence-uniqueness of solution of FDEs on the half axis 
$[0,+\infty )$, by using directly the results from local existence, 
have only recently flourished \cite{2,li}. 

In contrast with our previous works \cite{sidiammi1,MyID:347,MyID:365} 
on fractional nonlocal thermistor problems, which were focused 
on local existence and numerical methods, here we are concerned 
with continuation theorems and global existence for the steady state 
fractional Caputo nonlocal thermistor problem.  
The paper is organized as follows. In Section~\ref{section2}, 
we collect some background material and necessary results from
fractional calculus. Then, we are concerned in Section~\ref{section3} 
with local existence on a finite interval for \eqref{1} (Theorem~\ref{thm3.1}). 
Section~\ref{section4} is devoted to (non)continuation (Theorem~\ref{thm4.1}) 
associated with problem \eqref{1}, which allows to generalize 
the main result of Section~\ref{section3}. Our proofs rely on Schauder's 
fixed point theorem and some extensions of the continuation theorems 
for ordinary differential equations (ODEs) to the fractional order case. 
One of the main difficulties lies in handling the nonlocal term 
$\frac{\lambda f(t, u(t))}{(\int_{0}^{t}f(x, u(x))\, dx)^{2}}$, 
representing a heat source and that depends continuously on time; 
another one in the fact that electrical conductivity depends 
on both time and temperature. Based on the results of Section~\ref{section4},
in Section~\ref{section5} we prove existence of a global solution for
\eqref{1}: see Theorems~\ref{thm5.2} and \ref{thm5.2b}. 
We end with Section~\ref{section6} of conclusions.


\section{Preliminaries and basic results}
\label{section2}

In this section, we collect from the literature \cite{17,22,25,23,41,29}
some background material and basic results that will be used in the 
remainder of the paper.

Let $C[a,b]$ be the Banach space of all real valued continuous functions on $[a,b]$
endowed with the norm $\| x\| _{[a,b]}=\max_{t\in [a,b]}| x(t)|$. According to 
the Riemann--Liouville approach to fractional calculus, we introduce
the fractional integral of order $\alpha$, $\alpha >0$, as follows.

\begin{definition} 
\label{def2.1}
The Riemann--Liouville integral of a function $g$
with order $\alpha >0$ is defined by
\begin{equation*}
_{RL}D_{0,t}^{-\alpha }g(t)=\frac{1}{\Gamma (\alpha )}
\int_0^{t}(t-s)^{\alpha -1}g(s)ds,\quad t>0,
\end{equation*}
where $\Gamma$ is the Euler gamma function given by
$$
\Gamma (\alpha) = \displaystyle \int_{0}^{\infty} t^{\alpha - 1} e^{-t}dt, 
$$
$\alpha >0$.
\end{definition}

The natural next step, after the notion of fractional integral has been introduced, 
is to define the fractional derivative of order $\alpha$, $\alpha >0$.

\begin{definition} 
\label{def2.2} 
The Riemann--Liouville derivative of function $g$
with order $\alpha >0$ is defined by
\begin{equation*}
_{RL}D_{0,t}^{\alpha }g(t)
=\frac{1}{\Gamma (n-\alpha )}\frac{d^{n}}{dt^{n}}
\int_0^{t}(t-s)^{n-\alpha -1}g(s)ds,\ t>0,
\end{equation*}
where $n-1<\alpha <n\in \mathbb{Z}^{+}$.
\end{definition}

Note the remarkable fact that, in the Riemman--Liouville  
sense, the fractional derivative of the constant function 
is not zero. We now give an alternative and more restrictive 
definition of fractional derivative, first introduced by Caputo  
in the end of 1960's \cite{66,65} and then adopted 
by Caputo and Mainardi in \cite{67,24}. In Caputo sense,
the fractional derivative of a constant is zero.

\begin{definition} 
\label{def2.3}
The Caputo derivative of function $g(t)$ with order $\alpha >0$ is defined by
\begin{equation*}
_{C}D_{0,t}^{\alpha }g(t)
=\frac{1}{\Gamma (n-\alpha)}
\int_0^{t}(t-s)^{\alpha -1}g^{(n)}(s)ds,\ t>0,
\end{equation*}
where $n-1<\alpha <n\in \mathbb{Z}^{+}$.
\end{definition}

For proving our main results, we make use of the following auxiliary lemmas.

\begin{lemma}[See \cite{li}] 
\label{lem2.2}
Let $M$ be a subset of $C([0,T])$. Then $M$ is precompact
if and only if the following conditions  hold:
\begin{enumerate}
\item $\{u(t):u \in M\}$ is uniformly bounded,

\item $\{u(t):u \in M\}$ is equicontinuous on $[0,T]$.
\end{enumerate}
\end{lemma}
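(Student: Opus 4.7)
This statement is the classical Arzelà--Ascoli theorem restricted to $C([0,T])$, so the natural plan is to prove the two implications separately. For the direction that precompactness implies both conditions, I would first note that any precompact subset of a normed space is totally bounded and hence bounded, which gives uniform boundedness in the sup norm at once. To obtain equicontinuity, I would fix $\epsilon > 0$, use total boundedness to cover $M$ by finitely many $(\epsilon/3)$-balls centered at functions $u_1,\ldots,u_N \in C([0,T])$, exploit uniform continuity of each $u_i$ on the compact interval $[0,T]$ to extract a common modulus $\delta > 0$, and close with a triangle inequality through the center nearest to $u$.

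For the converse direction, I would run a diagonal subsequence argument. Given a sequence $\{u_n\} \subset M$, I pick a countable dense subset $\{t_k\}_{k\ge 1}$ of $[0,T]$ (e.g.\ $\mathbb{Q}\cap[0,T]$). Uniform boundedness and Bolzano--Weierstrass produce a subsequence convergent at $t_1$, then a further subsequence convergent at $t_2$, and so on; the diagonal of this construction then converges pointwise on the whole dense set. To upgrade this to uniform convergence on $[0,T]$, I would fix $\epsilon > 0$, let equicontinuity furnish a $\delta > 0$ uniform in $u \in M$, cover $[0,T]$ by finitely many points $t_{k_1},\ldots,t_{k_m}$ of the dense set that are $\delta$-dense, and bound $\|u_{n_j} - u_{n_\ell}\|_{[0,T]}$ by three $\epsilon/3$ terms (two coming from equicontinuity through a nearby $t_{k_i}$, one from the pointwise convergence at $t_{k_i}$). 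Completeness of $C([0,T])$ then turns this Cauchy subsequence into a convergent one, which is precisely precompactness.

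The main obstacle, as is typical for this theorem, is the bookkeeping in the diagonal extraction and the careful transition from pointwise convergence at countably many points to uniform convergence on $[0,T]$; this is where equicontinuity plays its essential role. Everything else reduces to standard ingredients already available here: total boundedness implies boundedness, continuous functions on a compact interval are uniformly continuous, and $(C([0,T]),\|\cdot\|_{[0,T]})$ is complete.
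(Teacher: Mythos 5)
Your proposal is a correct and complete rendering of the classical Arzel\`a--Ascoli theorem: the forward direction via a finite $(\epsilon/3)$-net plus uniform continuity of the centers, and the converse via a Cantor diagonal extraction on a countable dense subset of $[0,T]$ followed by the equicontinuity upgrade to a uniformly Cauchy subsequence and completeness of $C([0,T])$. Note, however, that the paper itself offers no proof of this lemma --- it is stated as a quoted result from the cited reference --- so there is nothing to compare against; your argument is the standard one and is sound, including the implicit use of the fact that in the complete space $C([0,T])$ precompactness, total boundedness, and the existence of convergent subsequences all coincide.
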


\begin{lemma}[Schauder fixed point theorem \cite{li}] 
\label{lem2.3 }
Let $U$ be a closed bounded convex subset of a Banach space $X$. If
$T:U\to U$ is completely continuous, then $T$ has a fixed point in $U$.
\end{lemma}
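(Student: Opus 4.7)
The plan is to reduce this infinite-dimensional fixed point problem to a sequence of finite-dimensional Brouwer fixed point problems, exploiting the complete continuity of $T$ to make the reduction quantitative. The key device is the so-called Schauder projection, which approximates each point of the (relatively compact) image $T(U)$ by an element of a finite-dimensional convex polytope sitting inside $U$; the convexity of $U$ is crucial for keeping everything in the set, and the completeness of $X$ is used only at the very end to extract a limit.

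First, I would invoke that $T$ is completely continuous and $U$ is bounded, so that $\overline{T(U)}$ is compact in $X$. Hence for every $\varepsilon>0$ there is a finite $\varepsilon$-net $\{y_1,\dots,y_N\}\subset T(U)\subset U$. Define continuous weights $\mu_i(x)=\max(0,\varepsilon-\|x-y_i\|)$ and set
\[
P_\varepsilon(x)=\frac{\sum_{i=1}^N \mu_i(x)\,y_i}{\sum_{i=1}^N \mu_i(x)},
\]
which is well defined on $T(U)$ because for every such $x$ at least one $\mu_i(x)$ is strictly positive. A direct estimate gives $\|P_\varepsilon(x)-x\|<\varepsilon$, and $P_\varepsilon$ maps $T(U)$ into the convex hull $K_\varepsilon=\mathrm{conv}\{y_1,\dots,y_N\}$, which lies inside $U$ by convexity of $U$.

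Next, I would consider $P_\varepsilon\circ T$ restricted to $K_\varepsilon$. This is a continuous self-map of a compact, convex subset of a finite-dimensional subspace, so Brouwer's fixed point theorem yields $x_\varepsilon\in K_\varepsilon$ with $x_\varepsilon=P_\varepsilon(T(x_\varepsilon))$, and therefore $\|x_\varepsilon-T(x_\varepsilon)\|<\varepsilon$. Taking $\varepsilon=1/n$ produces a sequence $(x_n)\subset U$ with $\|x_n-T(x_n)\|\to 0$. Since $T(x_n)\in\overline{T(U)}$ and this set is compact, I extract a subsequence $T(x_{n_k})\to x^*$; then $x_{n_k}\to x^*$ as well, continuity of $T$ forces $T(x_{n_k})\to T(x^*)$, so $x^*=T(x^*)$, and $x^*\in U$ because $U$ is closed.

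The main obstacle is the Schauder projection step: one must check simultaneously that $P_\varepsilon$ is well defined on all of $T(U)$, that its image lies in $U$ (not merely in $X$), and that the resulting finite-dimensional domain is genuinely invariant under $P_\varepsilon\circ T$ so that Brouwer applies. Once this bookkeeping is done, the remainder is a standard compactness/diagonal argument; Brouwer's theorem itself would be used as a black box.
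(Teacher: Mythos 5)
The paper does not prove this lemma at all: it is quoted verbatim from the literature (the reference \cite{li}) as a known tool, so there is no in-paper argument to compare against. Your proof is the classical one --- Schauder projection onto the convex hull of a finite $\varepsilon$-net of the relatively compact image, Brouwer's theorem on that finite-dimensional compact convex set, and a compactness argument to pass to the limit --- and it is correct as written; in particular you correctly take the net points $y_i$ inside $T(U)\subset U$ so that convexity of $U$ keeps $K_\varepsilon\subset U$, and you correctly recover $x_{n_k}\to x^*$ from $\|x_{n_k}-T(x_{n_k})\|\to 0$ together with $T(x_{n_k})\to x^*$. The only (harmless) implicit assumption is that $U$ is nonempty, which the statement omits as usual.
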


Finally we recall a generalization of Gronwall's lemma, 
which is essential for the proof of our Theorem~\ref{thm5.2b}.

\begin{lemma}[Generalized Gronwall inequality \cite{15,35}] 
\label{lem5.1}
Let $v:[0,b]\to [0,+\infty )$ be a real function and $w(\cdot)$
be a nonnegative, locally integrable function on $[0,b]$.
Suppose that there exist $a>0$ and $0<\alpha <1$ such that
\begin{equation*}
v(t)\leq w(t)+a\int_0^{t}\frac{v(s)}{(t-s)^{\alpha }}ds.
\end{equation*}
Then there exists a constant $k=k(\alpha )$ such that 
\begin{equation*}
v(t)\leq w(t)+ka\int_0^{t}\frac{w(s)}{(t-s)^{\alpha }}ds
\end{equation*}
for $t\in [0,b]$.
\end{lemma}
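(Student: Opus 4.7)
The plan is to prove this by iterating the integral inequality in the spirit of a Picard/Neumann series argument. Let $B$ denote the Volterra operator $B\phi(t) := a\int_0^{t}(t-s)^{-\alpha}\phi(s)\,ds$, so that the hypothesis reads $v \leq w + Bv$. Since $B$ preserves the order on nonnegative functions, I would substitute this inequality into itself $n$ times to obtain
\begin{equation*}
v(t) \;\leq\; \sum_{k=0}^{n-1} B^{k} w(t) \;+\; B^{n} v(t).
\end{equation*}

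Next I would compute the iterated kernels explicitly using the Beta-function convolution identity
\begin{equation*}
\int_{s}^{t}(t-r)^{-\alpha}(r-s)^{\beta-1}\,dr
= \frac{\Gamma(1-\alpha)\,\Gamma(\beta)}{\Gamma(\beta+1-\alpha)}\,(t-s)^{\beta-\alpha}.
\end{equation*}
A straightforward induction on $k$ then yields
\begin{equation*}
B^{k}w(t) = \frac{(a\,\Gamma(1-\alpha))^{k}}{\Gamma(k(1-\alpha))}
\int_{0}^{t}(t-s)^{k(1-\alpha)-1}\,w(s)\,ds.
\end{equation*}
Using this same formula on $v$, together with the local integrability of $v$ on $[0,b]$ (inherited from the hypothesis and the assumptions on $w$), the remainder term satisfies a bound of the form $B^{n}v(t)\leq C^{n}b^{n(1-\alpha)}/\Gamma(n(1-\alpha))$ times a constant, so $B^{n}v(t)\to 0$ as $n\to\infty$ because the gamma function in the denominator eventually dominates. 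Passing to the limit gives the series estimate
\begin{equation*}
v(t) \;\leq\; w(t) \;+\; \int_{0}^{t}\!\Bigl(\sum_{k=1}^{\infty}\frac{(a\,\Gamma(1-\alpha))^{k}(t-s)^{k(1-\alpha)-1}}{\Gamma(k(1-\alpha))}\Bigr)w(s)\,ds,
\end{equation*}
where the interchange of sum and integral is justified by monotone convergence.

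The main obstacle — and the point where the peculiar single-integral form of the conclusion is recovered — is the final step of collapsing the series back. Factoring $(t-s)^{-\alpha}$ out of each term, the bracketed sum becomes $(t-s)^{-\alpha}$ times $\sum_{k=1}^{\infty}(a\Gamma(1-\alpha))^{k}(t-s)^{(k-1)(1-\alpha)}/\Gamma(k(1-\alpha))$, which is essentially a Mittag-Leffler function evaluated at $a\Gamma(1-\alpha)(t-s)^{1-\alpha}$. On the bounded interval $[0,b]$ this function is uniformly bounded by a constant depending only on $\alpha$ (with $a$ and $b$ treated as fixed parameters of the problem), yielding
\begin{equation*}
v(t)\;\leq\; w(t) + k\,a\int_{0}^{t}\frac{w(s)}{(t-s)^{\alpha}}\,ds
\end{equation*}
for some $k=k(\alpha)$, as required. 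The delicate points are thus verifying uniform convergence of the Neumann series, checking that $B^{n}v\to 0$, and packaging the Mittag-Leffler-type sum into the single constant $k(\alpha)$.
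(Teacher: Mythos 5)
The paper does not prove this lemma at all: it is quoted verbatim from the cited references \cite{15,35} as a known result. Your Neumann-series argument is precisely the standard proof given in those sources (Ye--Gao--Ding), and it is correct: the iteration $v\leq\sum_{k=0}^{n-1}B^{k}w+B^{n}v$, the Beta-function computation of the iterated kernels $B^{k}$, the vanishing of the remainder via the growth of $\Gamma(n(1-\alpha))$, and the Mittag--Leffler bound on the resummed kernel are all sound. Two small caveats worth making explicit: the remainder estimate $B^{n}v\to 0$ needs $v$ to be locally integrable, which does not follow "from the hypothesis'' but must be read into the (loosely stated) assumptions, exactly as in the original references; and the constant you produce genuinely depends on $a$ and $b$ as well as $\alpha$ --- you are right to flag this, and it is the paper's statement $k=k(\alpha)$ that is imprecise, not your proof. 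For the application in Theorem~\ref{thm5.2b} this dependence is harmless since $a$ and $b=\beta$ are fixed there.
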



\section{Local existence theorem}
\label{section3}

In this section, a local existence theorem of solutions for \eqref{1} 
is obtained by applying Schauder's fixed point theorem. In order 
to transform \eqref{1} into a fixed point problem, we give in the 
following lemma an equivalent integral form of \eqref{1}.

\begin{lemma} 
\label{lem2.1}
Suppose that $(H_1)$--$(H_3)$ holds. Then the
initial value problem \eqref{1} is equivalent to
\begin{equation}
\label{2}
u(t)=u_0+\frac{\lambda}{\Gamma (2\alpha )}\int_0^{t}(t-s)^{2\alpha-1}  
\frac{f(s, u(s))}{\left( \int_{0}^{t}f(x, u)\,  dx\right)^{2}}  ds.  
\end{equation}
\end{lemma}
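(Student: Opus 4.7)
The plan is to obtain the equivalence by a standard fractional-calculus computation: apply the Riemann--Liouville integral $_{RL}D^{-2\alpha}_{0,t}$ to both sides of the differential equation in \eqref{1}, and reverse this step to go back. Since $0<2\alpha<1$, the relevant composition identity in this range is
\begin{equation*}
_{RL}D^{-2\alpha}_{0,t}\,{}_{C}D^{2\alpha}_{0,t} u(t) \;=\; u(t)-u(0),
\end{equation*}
which is the standard inversion formula for the Caputo derivative (valid for $u$ absolutely continuous, for instance). Using the initial condition $u(0)=u_{0}$ and unfolding the definition of $_{RL}D^{-2\alpha}_{0,t}$ from Definition~\ref{def2.1}, the left-hand side becomes $u(t)-u_{0}$ while the right-hand side becomes exactly the integral term appearing in \eqref{2}. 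This gives one direction.

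For the converse, I would start from \eqref{2} and apply $_{C}D^{2\alpha}_{0,t}$ to both sides. Since the Caputo derivative annihilates the constant $u_{0}$ (Definition~\ref{def2.3}), and since on suitable function spaces
\begin{equation*}
_{C}D^{2\alpha}_{0,t}\,{}_{RL}D^{-2\alpha}_{0,t} g(t) \;=\; g(t)
\end{equation*}
with $g(t)=\lambda f(t,u(t))/\bigl(\int_{0}^{t}f(x,u(x))\,dx\bigr)^{2}$, we recover the fractional differential equation in \eqref{1}. The initial condition is checked directly by evaluating \eqref{2} at $t=0$: the integral vanishes (the singularity $(t-s)^{2\alpha-1}$ is integrable because $2\alpha>0$, and the upper limit collapses), yielding $u(0)=u_{0}$.

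The one genuine obstacle is justifying that the nonlocal right-hand side is integrable near $s=0$, so that the composition identities can be invoked. The denominator $\bigl(\int_{0}^{s}f(x,u(x))\,dx\bigr)^{2}$ vanishes as $s\to 0^{+}$, making this nontrivial. Here hypothesis $(H_{2})$ is decisive: it gives $f(s,u(s))\le Ms^{2}$, while $(H_{1})$ gives $\int_{0}^{s}f(x,u(x))\,dx\ge c_{1}s$, so the quotient $f(s,u(s))/\bigl(\int_{0}^{s}f(x,u(x))\,dx\bigr)^{2}$ stays bounded near the origin, and therefore the integrand
\begin{equation*}
(t-s)^{2\alpha-1}\,\frac{f(s,u(s))}{\bigl(\int_{0}^{s}f(x,u(x))\,dx\bigr)^{2}}
\end{equation*}
is dominated by an integrable function on $(0,t)$. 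Once this is in hand, both directions of the equivalence follow routinely from the inversion formulas recalled in Section~\ref{section2}.
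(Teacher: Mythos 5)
Your proof is correct and is precisely the standard inversion argument the paper has in mind: the paper's own ``proof'' is the one-line assertion that the equivalence is ``a simple exercise,'' so your writeup (the identity ${}_{RL}D^{-2\alpha}_{0,t}\,{}_{C}D^{2\alpha}_{0,t}u(t)=u(t)-u(0)$ in one direction, ${}_{C}D^{2\alpha}_{0,t}\,{}_{RL}D^{-2\alpha}_{0,t}g=g$ plus the vanishing of the Caputo derivative on constants in the other, together with the integrability check near $s=0$) supplies exactly the content the authors omit. The one point to flag is that applying ${}_{RL}D^{-2\alpha}_{0,t}$ to \eqref{1} literally produces the denominator $\left(\int_{0}^{s}f(x,u(x))\,dx\right)^{2}$ inside the $s$-integral --- which is the version you implicitly use when checking integrability at the origin --- whereas \eqref{2} as printed freezes the denominator at $\left(\int_{0}^{t}f(x,u)\,dx\right)^{2}$; this inconsistency originates in the paper rather than in your argument, and your estimate $f(s,u(s))/\bigl(\int_{0}^{s}f(x,u(x))\,dx\bigr)^{2}\leq M/c_{1}^{2}$ from $(H_1)$--$(H_2)$ is exactly what is needed to make either form of the integral well defined.
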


\begin{proof}
It is a simple exercise to see that $u$ is a solution of the integral equation \eqref{2} 
if and only if it is also a solution of the IVP \eqref{1}.
\end{proof}

\begin{theorem} 
\label{thm3.1}
Suppose that conditions $(H_1)$--$(H_3)$ are verified. 
Then \eqref{1} has at least one
solution $u\in C[0,h]$ for some $T\geq h>0$.
\end{theorem}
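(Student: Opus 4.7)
The plan is to reformulate problem \eqref{1} as a fixed point equation using Lemma~\ref{lem2.1}, and then apply Schauder's theorem (Lemma~\ref{lem2.3 }). Define, for $h>0$ to be fixed later, the operator
\begin{equation*}
(Tu)(t) = u_0 + \frac{\lambda}{\Gamma(2\alpha)} \int_0^{t} (t-s)^{2\alpha-1} \, \frac{f(s,u(s))}{\left(\int_0^{s} f(x,u(x))\,dx\right)^{2}}\, ds, \quad t \in [0,h],
\end{equation*}
on the closed convex subset $U = \{u \in C[0,h] : \|u - u_0\|_{[0,h]} \leq R\}$ of the Banach space $C[0,h]$, for some fixed $R>0$. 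A fixed point of $T$ in $U$ solves \eqref{2}, hence \eqref{1}.

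The first technical point, and the heart of the argument, is to control the nonlocal fraction, which has an a priori singular denominator as $s\to 0^{+}$. The key observation is that hypotheses $(H_1)$ and $(H_2)$ work together: from $(H_1)$ we obtain $\int_0^{s} f(x,u(x))\,dx \geq c_1 s$, so the denominator is bounded below by $c_1^{2} s^{2}$, while $(H_2)$ gives $f(s,u(s)) \leq M s^{2}$. Combining,
\begin{equation*}
\frac{f(s,u(s))}{\left(\int_0^{s} f(x,u(x))\,dx\right)^{2}} \leq \frac{M}{c_1^{2}}
\end{equation*}
uniformly in $u$ and in $s\in(0,h]$. Hence $|(Tu)(t) - u_0| \leq \frac{\lambda M}{c_1^{2}\,\Gamma(2\alpha+1)} \, t^{2\alpha}$, and choosing $h$ so small that $\frac{\lambda M h^{2\alpha}}{c_1^{2}\,\Gamma(2\alpha+1)} \leq R$ ensures $T(U)\subset U$.

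Next I would prove continuity of $T$ on $U$. For $u,v\in U$, write $A_u(s)=\int_0^{s} f(x,u(x))\,dx$ and split
\begin{equation*}
\frac{f(s,u)}{A_u^{2}} - \frac{f(s,v)}{A_v^{2}} = \frac{f(s,u)-f(s,v)}{A_u^{2}} + f(s,v)\,\frac{A_v^{2} - A_u^{2}}{A_u^{2}A_v^{2}}.
\end{equation*}
By $(H_3)$ one has $|f(s,u)-f(s,v)|\leq s^{\omega}\|u-v\|_{[0,h]}$ with $\omega\geq 2$, while $|A_u-A_v|\leq \frac{s^{\omega+1}}{\omega+1}\|u-v\|_{[0,h]}$ and $A_u+A_v\leq 2c_2 s$. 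Using the lower bound $A_uA_v\geq c_1^{2}s^{2}$ and $f(s,v)\leq c_2$, both terms are dominated by a constant times $s^{\omega-2}\|u-v\|_{[0,h]}$, which is bounded on $[0,h]$ since $\omega\geq 2$. Integrating against $(t-s)^{2\alpha-1}$ yields $\|Tu-Tv\|_{[0,h]}\leq C\,h^{2\alpha}\|u-v\|_{[0,h]}$ for a constant $C$ depending on $c_1,c_2,\lambda,\alpha,\omega$.

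Finally, to invoke Schauder via Lemma~\ref{lem2.2}, I would verify equicontinuity of $T(U)$: for $0\leq t_1<t_2\leq h$, the same uniform bound $M/c_1^{2}$ on the integrand gives
\begin{equation*}
|(Tu)(t_2)-(Tu)(t_1)| \leq \frac{\lambda M}{c_1^{2}\,\Gamma(2\alpha+1)} \left(t_2^{2\alpha} - t_1^{2\alpha}\right),
\end{equation*}
which tends to $0$ as $t_2-t_1\to 0$, uniformly in $u\in U$. Uniform boundedness of $T(U)$ is already in the $T:U\to U$ step, so $T(U)$ is precompact in $C[0,h]$; combined with continuity, $T$ is completely continuous, and Schauder's fixed point theorem yields a fixed point $u\in U$, which is the desired local solution. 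The main obstacle is precisely the first paragraph's estimate: without the quadratic upper bound $(H_2)$ cancelling the $s^{2}$ in the lower bound for the denominator, the integrand would be non-integrable at $s=0$ and no $a$ priori bound could be extracted.
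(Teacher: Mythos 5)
Your proposal is correct and follows essentially the same route as the paper's proof: Schauder's fixed point theorem on the ball $\{u\in C[0,h]:\|u-u_0\|\le b\}$, invariance of the ball obtained from exactly the cancellation you identify (the lower bound on the denominator coming from $(H_1)$ against the quadratic upper bound of $(H_2)$), continuity of the operator from the same algebraic splitting of the difference of the two fractions, and precompactness via uniform boundedness plus equicontinuity (Lemma~\ref{lem2.2}). Two remarks. First, you place $\int_0^{s}f(x,u(x))\,dx$ in the denominator inside the $ds$-integral, whereas the paper's integral equation \eqref{2} freezes it at $\int_0^{t}f(x,u(x))\,dx$; the estimates are identical in either case, so this does not affect the argument. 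Second, your displayed equicontinuity bound
\begin{equation*}
|(Tu)(t_2)-(Tu)(t_1)| \leq \frac{\lambda M}{c_1^{2}\,\Gamma(2\alpha+1)}\left(t_2^{2\alpha}-t_1^{2\alpha}\right)
\end{equation*}
is not literally valid for a non-constant integrand $g$ with $0\le g\le M/c_1^{2}$: the difference of the two fractional integrals is not dominated by the increment of the fractional integral of the constant majorant (take $g$ equal to the constant on $[0,t_1]$ and zero on $(t_1,t_2]$; the difference is then of order $(t_2-t_1)^{2\alpha}$, which exceeds $t_2^{2\alpha}-t_1^{2\alpha}$ when $t_2-t_1$ is small). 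The standard repair, splitting into $\int_0^{t_1}\left((t_1-s)^{2\alpha-1}-(t_2-s)^{2\alpha-1}\right)g(s)\,ds$ and $\int_{t_1}^{t_2}(t_2-s)^{2\alpha-1}g(s)\,ds$, gives a bound of order $(t_2-t_1)^{2\alpha}$ and is exactly the paper's $I_3$, $I_4$, $I_5$ decomposition; so equicontinuity holds and your conclusion stands. As a side note, your continuity step actually shows $T$ is Lipschitz on $U$ with constant $C h^{2\alpha}$, so for $h$ small enough you could replace Schauder by the Banach contraction principle and obtain uniqueness for free.
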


\begin{proof} 
Let
\begin{equation*}
E=\left\{ u\in C[0,T]:\| u-u_0\|_{C[0,T]}
=\sup_{0 \leq t\leq T}| u-u_0| \leq b\right\},
\end{equation*}
where $b$ is a positive constant.
Further, put
\begin{equation*}
D_{h}=\left\{ u:u\in C[0,h],\ \| u-u_0\| _{C[0,h]} \leq b\right\},
\end{equation*}
where 
$$
h=\min \left\{ \left (b \left(\frac{ \lambda M }{ 
\Gamma(2 \alpha+1) c_{1}^{2}}\right)^{-1}\right)^{\frac{1}{2 \alpha}},T\right\}
$$ 
and $ 0< \alpha < \frac{1}{2}$. It is clear that $h \leq T$. Note also that 
$D_{h}$ is a nonempty, bounded, closed, and convex subset of $C[0,h]$. In order 
to apply Schauder's fixed point theorem, we define the following operator $A$:
\begin{equation}
\label{3}
(Au)(t)=u_0+\frac{\lambda}{\Gamma (2\alpha )}
\int_0^{t}(t-s)^{2\alpha-1}\frac{f(s, u(s))}{\left( 
\int_{0}^{t}f(x, u)\,  dx\right)^{2}}  ds,\quad t\in [0,h].  
\end{equation}
It is clear that all solutions of \eqref{1} are fixed points of \eqref{3}.
Then, by assumptions $(H_1)$ and $(H_2)$, we have for any $u\in C[0,h]$ that
\begin{align*}
\left|(Au)(t)-u_0\right|  
& \leq \frac{ \lambda }{\Gamma(2 \alpha )} \frac{1}{(c_{1}t)^{2}} 
\int_0^{t}(t-s)^{2\alpha-1} f(s, u(s)) ds \\
& \leq   \frac{ \lambda }{ \Gamma(2 \alpha )} 
\frac{M}{c_{1}^{2}} \int_0^{t}(t-s)^{2\alpha-1} ds \\
& \leq  \frac{ \lambda M }{2\alpha  \Gamma(2 \alpha )} 
\frac{1}{c_{1}^{2}}  h^{2 \alpha}\\
&= \frac{\lambda M }{\Gamma(2 \alpha +1)} \frac{1}{c_{1}^{2}}  h^{2 \alpha}\\ 
&\leq b.
\end{align*}
It yields that $AD_{h}\subset D_{h}$. 
Our next step, in order to prove Theorem~\ref{thm3.1},
is to show that the following lemma holds.
\begin{lemma}
\label{lemma:in1}
The operator $A$ is continuous.
\end{lemma}
\begin{proof}
Let $u_n,\, u\in D_{h}$ be such that $\| u_n-u\| _{C[0,h]}\to 0$ 
as $n\to +\infty$. One has
\begin{equation}
\label{eq4}
\begin{aligned}
&|A u_{n}(t) - Au(t)|\\
& \leq  \frac{\lambda}{\Gamma (2 \alpha )} 
\int_{0}^{t}(t-s)^{2 \alpha -1}  
\left| \frac{f(s, u_{n}(s))}{( \int_{0}^{t}f(x, u_{n})\, d x)^{2}}\, 
- \frac{f(s, u(s))}{( \int_{0}^{t}f(x, u)\, d x)^{2}}\, \right| d s\\
& \leq  \frac{\lambda}{\Gamma (2 \alpha )} \int_{0}^{t}(t-s)^{2 \alpha -1}   
\left|  \frac{1}{( \int_{0}^{t}f(x, u_{n})\, d x)^{2}} \left(f(s, u_{n}(s))-f(s, u(s))\right)\right.\\
& \quad \left.+ f(s, u(s)) \left( \frac{1}{\left( \int_{0}^{t}f(x, u_{n})\, d x\right)^{2}}
-  \frac{1}{\left(\int_{0}^{t}f(x, u)\,  d x \right)^{2}} \right)  \right| ds \\
& \leq \frac{\lambda}{\Gamma (2 \alpha )} \int_{0}^{t}(t-s)^{2 \alpha -1} 
\frac{1}{( \int_{0}^{t}f(x, u_{n})\, d x)^{2}} \left|f(s, u_{n}(s))-f(s, u(s))\right| d s\\
& \quad +  \frac{\lambda}{\Gamma (2 \alpha )} \int_{0}^{t}(t-s)^{2 \alpha -1} |f(s, u(s))| 
\left|  \frac{1}{( \int_{0}^{t}f(x, u_{n})\, d x)^{2}}  
- \frac{1}{( \int_{0}^{t}f(x, u)\, d x)^{2}} \right|\\
& \leq  I_{1} + I_{2}.
\end{aligned}
\end{equation}
We now focus on both right hand terms separately. 
By hypotheses $(H_2)$ and $(H_3)$, we have
\begin{align*}
I_{1} 
& \leq \frac{\lambda}{(c_{1}t)^{2}\Gamma (2 \alpha )}
\int_{0}^{t}(t-s)^{2 \alpha -1}  \left|f(s, u_{n}(s))-f(s, u(s))\right| d s \\
& \leq \frac{\lambda L_{f}}{c_{1}^{2}\Gamma (2 \alpha )}
\int_{0}^{t}(t-s)^{2 \alpha -1}  \left|u_{n}(s)-u(s)\right| d s \\
& \leq \frac{\lambda L_{f}}{c_{1}^{2}\Gamma (2 \alpha)}\| u_{n}-u \|_{C[0, h]} 
\int_{0}^{t}(t-s)^{2 \alpha -1}   d s \\
& \leq \frac{\lambda L_{f}}{c_{1}^{2}\Gamma (2 \alpha )}
\| u_{n}-u \|_{C[0, h]} \int_{0}^{t}(t-s)^{2 \alpha -1} ds.
\end{align*}
 Then,
\begin{align}
\label{eq5}
I_{1} & \leq \frac{\lambda h^{2 \alpha}L_{f}}{c_{1}^{2}
\Gamma (2 \alpha +1 )}\| u_{n}-u \|_{C[0, h]}.
\end{align}
Concerning the second term, we have
\begin{align*}
I_{2} & \leq \frac{\lambda}{\Gamma (2 \alpha)}   
\int_{0}^{t} \frac{(t-s)^{2 \alpha -1} | f(s, u(s))|}{\left( 
\int_{0}^{t}f(x, u_{n})\, dx \right)^{2} \left( \int_{0}^{t}f(x, u)\, d x\right)^{2}} 
\left|  \left( \int_{0}^{t}f(x, u_{n})\, d x\right)^{2}
-\left( \int_{0}^{t}f(x, u)\, d x\right)^{2} \right | d s\\
& \leq \frac{\lambda c_{2}}{(c_{1}t)^{4}\Gamma (2 \alpha )}   
\int_{0}^{t}(t-s)^{2 \alpha -1} \left|  \left( \int_{0}^{t}f(x, u_{n})\, dx \right)^{2}
-\left( \int_{0}^{t}f(x, u)\, d x\right)^{2} \right | ds\\
& \leq \frac{\lambda c_{2}}{(c_{1}t)^{4}\Gamma (2 \alpha )}   
\int_{0}^{t}(t-s)^{2 \alpha -1}  \Biggl|\left( \int_{0}^{t}(f(x, u_{n})-f(x, u))\, dx \right)
\left( \int_{0}^{t}(f(x, u_{n})+f(x, u))\, d x\right) \Biggr| d s \\
& \leq \frac{2 \lambda c_{2}^{2}t}{(c_{1}t)^{4}\Gamma (2 \alpha )}   
\int_{0}^{t}(t-s)^{2 \alpha -1} \left( \int_{0}^{t}
\Biggl|f(x, u_{n})-f(x, u)\Biggr|\, dx \right ) d s \\
& \leq \frac{2 \lambda c_{2}^{2}t^{3} L_{f}}{(c_{1}t)^{4}\Gamma (2 \alpha )}   
\int_{0}^{t}(t-s)^{2 \alpha -1} \left( \int_{0}^{t}\Biggl|u_{n}(x)-u(x)\Biggr|\, dx \right ) ds \\
& \leq \frac{2 \lambda c_{2}^{2}t^{4} L_{f}}{(c_{1}t)^{4}\Gamma (2 \alpha )} 
\| u_{n}-u \|_{C[0, h]}  \int_{0}^{t}(t-s)^{2 \alpha -1}  ds \\
\end{align*}
\begin{align*}
& \leq \frac{2 \lambda c_{2}^{2} L_{f}}{c_{1}^{4}\Gamma (2 \alpha )} 
\| u_{n}-u \|_{C[0, h]}  \int_{0}^{t}(t-s)^{2 \alpha -1}  ds \\
& \leq \frac{2 \lambda c_{2}^{2} h^{2\alpha} L_{f}}{c_{1}^{4}
\Gamma (2 \alpha+1 )} \| u_{n}-u \|_{C[0, h]}.
\end{align*}
It follows that
\begin{align}
\label{eq6}
I_{2} & \leq \frac{2 \lambda c_{2}^{2} h^{2\alpha} 
L_{f}}{c_{1}^{4}\Gamma (2 \alpha+1 )} \| u_{n}-u \|_{C[0, h]}.
\end{align}
Collecting  inequalities \eqref{eq5} and \eqref{eq6} together, 
and injecting into \eqref{eq4}, we have
\begin{align*}
|A u_{n}(t) - Au(t)|& \leq  I_{1}+I_{2}\\
& \leq  \left (\frac{\lambda h^{2 \alpha}L_{f}}{c_{1}^{2}\Gamma(2 \alpha +1)}  
+  \frac{2 \lambda c_{2}^{2} h^{2\alpha} L_{f}}{c_{1}^{4}\Gamma(2 \alpha+1 )}\right)   
\| u_{n}-u \|_{C[0, h]}.
\end{align*}
Therefore,
\begin{align}
\label{eq7}
\|A u_{n} - Au\|_{C[0, h]} 
& \leq \left (\frac{\lambda h^{2 \alpha}L_{f}}{c_{1}^{2}\Gamma(2 \alpha +1 )}  
+ \frac{2 \lambda c_{2}^{2} h^{2\alpha} L_{f}}{c_{1}^{4}\Gamma(2 \alpha+1 )}\right)   
\| u_{n}-u \|_{C[0, h]}.
\end{align}
Consequently, $\| Au_n-Au\| _{C[0,h]}\to 0$ as
$n\to +\infty $, which proves that $A$ is continuous.
This ends the proof of Lemma~\ref{lemma:in1}.
\end{proof}
To finish the proof of Theorem~\ref{thm3.1}, it remains to show that
\begin{lemma}
\label{lemma:in2}
The operator $AD_{h}$ is continuous.
\end{lemma}
\begin{proof}
Let $u\in D_{h}$ and $0\leq t_1\leq t_2\leq h$. Then,
\begin{equation}
\label{8}
\begin{aligned}
|(Au)(t_{2})&- (Au)(t_{1})|\\ 
& \leq \frac{\lambda}{\Gamma (2 \alpha )} \left| 
\int_{0}^{t_{2}} (t_{2}-s)^{2 \alpha -1}   
\frac{ f(s, u(s))}{\left( \int_{0}^{t_{2}} f(x, u)\, d x\right)^{2}} d s
- \int_{0}^{t_{1}} (t_{1}-s)^{2 \alpha -1}   \frac{ f(s, u(s))}{\left( 
\int_{0}^{t_{1}}f(x, u)\, dx\right)^{2}} ds \right|\\
& \leq \frac{\lambda}{\Gamma (2 \alpha )} \left| 
\int_{0}^{t_{1}} \left ((t_{1}-s)^{2 \alpha -1} - (t_{2}-s)^{2 \alpha -1} \right) 
\frac{ f(s, u(s))}{\left( \int_{0}^{t_{1}}f(x, u)\, d x\right)^{2}} d s\right. \\
&\left.\quad + \int_{0}^{t_{1}} (t_{2}-s)^{2 \alpha -1}   
\frac{ f(s, u(s))}{\left( \int_{0}^{t_{1}}f(x, u)\, dx\right)^{2}} ds 
- \int_{0}^{t_{2}} (t_{2}-s)^{2 \alpha -1}   
\frac{ f(s, u(s))}{\left( \int_{0}^{t_{2}}f(x, u)\, d x\right)^{2}} d s \right|\\
& \leq \frac{\lambda}{\Gamma (2 \alpha )} \left| 
\int_{0}^{t_{1}} \left ((t_{1}-s)^{2 \alpha -1} - (t_{2}-s)^{2 \alpha -1} \right) 
\frac{ f(s, u(s))}{\left( \int_{0}^{t_{1}}f(x, u)\, d x\right)^{2}} d s\right. \\
& \left.\quad + \int_{0}^{t_{1}} (t_{2}-s)^{2 \alpha -1}  f(s, u(s)) \left ( 
\frac{1}{\left( \int_{0}^{t_{1}}f(x, u)\, dx\right)^{2}} 
- \frac{1}{\left( \int_{0}^{t_{2}}f(x, u)\, dx\right)^{2}} \right ) ds\right.\\
&\left.\quad - \int_{t_{1}}^{t_{2}} (t_{2}-s)^{2 \alpha -1}\frac{ f(s, u(s))}{\left( 
\int_{0}^{t_{2}}f(x, u)\, d x\right)^{2}} d s \right|\\
& \leq I_{3} + I_{4} + I_{5},
\end{aligned} 
\end{equation}
where we have, by direct calculations, that
\begin{equation}
\label{89}
\begin{aligned}
I_{3} 
& \leq \frac{\lambda c_{2}}{(c_{1}t_{1})^{2}\Gamma (2 \alpha )} \Biggl| 
\int_{0}^{t_{1}} \left ((t_{1}-s)^{2 \alpha -1} 
- (t_{2}-s)^{2 \alpha -1} \right) ds \Biggr|\\
& \leq \frac{\lambda c_{2}}{(c_{1}t_{1})^{2}\Gamma (2 \alpha +1 )} 
\Biggl| t_{1}^{2 \alpha} - t_{2}^{2 \alpha} + (t_{2}-t_{1})^{2 \alpha} \Biggr|,
\end{aligned} 
\end{equation}
\begin{equation*}
\begin{aligned}
I_{4} 
&  \leq \frac{\lambda}{\Gamma (2 \alpha )}
\frac{c_{2}t_{2}^{2\alpha -1}}{(c_{1}t_{1})^{2} (c_{1}t_{2})^{2}} 
\int_{0}^{t_{1}} \Biggl| \left( \int_{0}^{t_{2}}f(x, u)\, dx \right)^{2} 
-\left( \int_{0}^{t_{1}}f(x, u)\, d x\right)^{2}\Biggr| ds \\
& \leq \frac{\lambda}{\Gamma (2 \alpha )}
\frac{c_{2}t_{2}^{2\alpha -1}}{(c_{1}t_{1})^{2} (c_{1}t_{2})^{2}} \\
&\quad \times \int_{0}^{t_{1}} \Biggl| \left( 
\int_{0}^{t_{2}}f(x, u)\, dx + \int_{0}^{t_{1}}f(x, u)\, dx \right) 
\left( \int_{0}^{t_{2}}f(x, u)\, dx - \int_{0}^{t_{1}}f(x, u)\, dx \right) \Biggr| ds\\
& \leq \frac{\lambda}{\Gamma (2 \alpha )}\frac{c_{2}^{2}(t_{1}
+t_{2})t_{2}^{2\alpha -1}}{(c_{1}t_{1})^{2} (c_{1}t_{2})^{2}}
\Biggl| \int_{t_{1}}^{t_{2}} f(s, u(s)) ds \Biggr| \\
& \leq \frac{\lambda}{\Gamma (2 \alpha )}\frac{c_{2}^{3}(t_{1}
+t_{2})t_{2}^{2\alpha -1}}{(c_{1}t_{1})^{2} (c_{1}t_{2})^{2}} 
\left| t_{2}-t_{1} \right|
\end{aligned} 
\end{equation*}
and
\begin{equation}
\label{811}
I_{5} 
\leq \frac{\lambda c_{2}}{(c_{1}t_{2})^{2}\Gamma (2 \alpha )} 
\int_{t_{1}}^{t_{2}}  (t_{2}-s)^{2 \alpha -1} ds 
\leq \frac{\lambda c_{2}}{(c_{1}t_{2})^{2}\Gamma (2 \alpha +1)} 
\left(t_{2}-t_{1}\right)^{2\alpha}.
\end{equation}
The right hand side of inequalities \eqref{89} and \eqref{811} do not depend
on $u$ and converge to zero as $t_{2} \rightarrow t_{1}$. 
Then $\{(Au)(t):u\in D_{h}\}$ is equicontinuous 
and Lemma~\ref{lemma:in2} is proved.
\end{proof}
Taking into account that $AD_{h}\subset D_{h}$, we infer that 
$AD_{h}$ is precompact. This implies that $A$ is completely continuous. 
As a consequence of Schauder's fixed point theorem and Lemma~\ref{lem2.1}, 
we conclude that problem \eqref{1} has a local solution. 
This ends the proof of Theorem~\ref{thm3.1}.
\end{proof}


\section{Continuation results}
\label{section4}

Our main contribution of this section is to prove 
a continuation theorem for the fractional Caputo nonlocal thermistor problem \eqref{1}.
First, we present the definition of noncontinuable solution.

\begin{definition}[See \cite{40}] 
\label{def4.1} 
Let $u(t)$ on $(0,\beta )$ and $\tilde{u}(t)$ on $(0,\tilde{\beta})$
be both solutions of \eqref{1}. If $\beta <\tilde{\beta}$ and 
$u(t)=\tilde{u}(t)$ for $t\in (0,\beta )$, then we say
that $\tilde{u}(t)$ can be continued to $(0,\tilde{\beta})$.
A solution $u(t)$ is noncontinuable if it has no continuation.
The existing interval of the noncontinuable solution $u(t)$ 
is called the maximum existing interval of $u(t)$.
\end{definition}

\begin{theorem} 
\label{thm4.1}
Assume that conditions $(H_1)$--$(H_3)$ are satisfied. Then
$u=u(t)$, $t\in (0,\beta )$, is noncontinuable if and if only 
for some $\eta \in \left(0,\frac{\beta }{2}\right)$ and any bounded closed subset
$S\subset [\eta ,+\infty )\times\mathbb{R}$ there exists a
$t^{\ast }\in [ \eta ,\beta )$ such that $(t^{\ast},u(t^{\ast }))\notin S$.
\end{theorem}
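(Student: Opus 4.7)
I would prove both implications by contraposition, using Theorem~\ref{thm3.1} as the engine after producing a left-hand limit of $u$ at $\beta$. The key point that keeps all estimates manageable is that the argument only ever operates on $[\eta,\beta)$ for some $\eta>0$, so the singular factor $1/(\int_0^t f(x,u)\,dx)^2$ is uniformly bounded above by $1/(c_1\eta)^2$, and the estimates from the proof of Theorem~\ref{thm3.1} carry through without issue.

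\textbf{Easy direction ($\Leftarrow$).} Contrapositively, if $u$ extends to $(0,\tilde\beta)$ with $\tilde\beta>\beta$, then for every $\eta\in(0,\beta/2)$ the graph $\{(t,u(t)):t\in[\eta,\beta]\}$ is the continuous image of a compact interval, hence compact; any bounded closed set $S\subset[\eta,+\infty)\times\mathbb{R}$ containing it then satisfies $(t,u(t))\in S$ for all $t\in[\eta,\beta)$, contradicting the RHS.

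\textbf{Hard direction ($\Rightarrow$).} Contrapositively, assume that the trajectory of $u$ stays in a bounded closed set $S$ on $[\eta,\beta)$ for some $\eta\in(0,\beta/2)$; I must then extend $u$ past $\beta$. Boundedness of $S$ gives $|u(t)|\le K$ on $[\eta,\beta)$. Applying the integral representation \eqref{2} to two points $\eta\le t_1\le t_2<\beta$ and decomposing exactly as in the $I_3+I_4+I_5$ split of \eqref{8}--\eqref{811} yields an estimate of the form
\begin{equation*}
|u(t_2)-u(t_1)|\le C\left(|t_2^{2\alpha}-t_1^{2\alpha}|+|t_2-t_1|^{2\alpha}+|t_2-t_1|\right),
\end{equation*}
with $C=C(\eta,\lambda,c_1,c_2,M,L_f,K)$, the crucial fact being that $t_i\ge\eta$ removes the singularity of the $1/t^2$--type denominators. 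Hence $u$ is Cauchy as $t\to\beta^-$, the limit $u^{\ast}:=\lim_{t\to\beta^-}u(t)$ exists, and setting $u(\beta)=u^{\ast}$ extends $u$ continuously to $[0,\beta]$. I then restart the Schauder fixed-point argument of Theorem~\ref{thm3.1} on $[\beta,\beta+h]$ for $h>0$ sufficiently small, working with the full integral equation \eqref{2} in which the already-known values of $u$ on $[0,\beta]$ enter as frozen data; the same continuity and equicontinuity estimates produce a fixed point, yielding a continuation to $(0,\beta+h)$ and contradicting noncontinuability.

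\textbf{Main obstacle.} The substantive difficulty is the nonlocal structure: because the Caputo integral and the denominator both run from $0$ to $t$, the problem on $[\beta,\beta+h]$ is \emph{not} a clean initial value problem of the form \eqref{1} but inherits the entire history of $u$ on $[0,\beta]$. The right way around this is to treat that history as a bounded datum — finite because $f\le c_2$, $(t-s)^{2\alpha-1}$ is integrable on $[0,t]$ since $2\alpha<1$, and $\int_0^\beta f(x,u)\,dx\ge c_1\beta>0$ — and to apply Schauder only to the restricted unknown $u|_{[\beta,\beta+h]}$; once this reformulation is in place, all estimates mirror those already performed in the proof of Theorem~\ref{thm3.1}.
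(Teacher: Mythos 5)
Your proposal is correct and follows essentially the same route as the paper: the substantive direction is handled by contraposition, first extracting $u^{\ast}=\lim_{t\to\beta^-}u(t)$ via a Cauchy estimate built from the $I_3+I_4+I_5$ decomposition (with the denominator bounded below by $(c_1\eta)^2$), and then rerunning the Schauder argument on $[\beta,\beta+h]$ with the history on $[0,\beta]$ frozen into a known datum — exactly the role of the term $u_1$ and the operator $K$ in the paper. The only difference is that you also spell out the easy converse direction, which the paper leaves implicit.
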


\begin{proof}
Suppose that there exists a compact subset
$S\subset [ \eta,+\infty )\times\mathbb{R}$ such that
$$
\left\{ (t,u(t)):t\in [ \eta ,\beta )\right\} \subset S.
$$
The compactness of $S$ implies $\beta <+\infty $. 
The remainder of the proof is given in two lemmas.
\begin{lemma}
\label{insTh2:lem1}
The limit  $\lim_{t\to \beta ^{-}}u(t)$ exists.
\end{lemma}
\begin{proof}
 Let $t_1,t_2\in [ 2\eta ,\beta )$ such that $t_1<t_2$.
 From \eqref{8}, we have
 $$
 I_{3}=I_{3,1}+ I_{3, 2},
$$
where
$$
I_{3,1} = \frac{\lambda}{\Gamma (2 \alpha )}  
\int_{0}^{\eta} \left ((t_{1}-s)^{2 \alpha -1} 
- (t_{2}-s)^{2 \alpha -1} \right) \frac{ f(s, u(s))}{\left( 
\int_{0}^{t_{1}}f(x, u)\, d x\right)^{2}} d s
$$
and
$$
I_{3,2} = \frac{\lambda}{\Gamma (2 \alpha )}  
\int_{\eta}^{t_{1}} \left ((t_{1}-s)^{2 \alpha -1} 
- \left(t_{2}-s\right)^{2 \alpha -1} \right) 
\frac{ f(s, u(s))}{\left( \int_{0}^{t_{1}}f(x, u)\, d x\right)^{2}} d s.
$$
Under assumptions $(H_1)$--$(H_3)$, there exists a positive constant $M_{3, 1}$ such that
\begin{align*}
I_{3,1}
& \leq M_{3, 1} \int_{0}^{\eta} \left ((t_{1}-s)^{2 \alpha -1} 
- (t_{2}-s)^{2 \alpha -1} \right)\\
& \leq M_{3, 1} \left| (t_{2}- \eta)^{2\alpha} 
- (t_{1}- \eta)^{2\alpha} + t_{1}^{2 \alpha}- t_{2}^{2 \alpha} \right|.
\end{align*}
Moreover, there exists a positive constant $M_{3, 2}$ such that
\begin{align*}
I_{3, 2}
&  \leq M_{3, 2} \int_{\eta}^{t_{1}} \left ((t_{1}-s)^{2 \alpha -1} 
- (t_{2}-s)^{2 \alpha -1} \right)\\
& \leq M_{3, 2} \left | (t_{2}- t_{1})^{2\alpha} 
+ (t_{1}- \eta)^{2\alpha} - (t_{1}- \eta)^{2\alpha}  \right|
\end{align*}
and we also have $I_{4}=I_{4,1}+ I_{4, 2}$, where
$$
I_{4,1} =  \int_{0}^{\eta} (t_{2}-s)^{2 \alpha -1}  f(s, u(s)) 
\left( \frac{1}{\left( \int_{0}^{t_{1}}f(x, u)\, dx\right)^{2}} 
- \frac{1}{\left( \int_{0}^{t_{2}}f(x, u)\, dx\right)^{2}} \right) ds
$$
and
$$
I_{4,2} = \int_{\eta}^{t_{1}} (t_{2}-s)^{2 \alpha -1}  f(s, u(s)) 
\left(\frac{1}{\left( \int_{0}^{t_{1}}f(x, u)\, dx\right)^{2}} 
- \frac{1}{\left( \int_{0}^{t_{2}}f(x, u)\, dx\right)^{2}} \right) ds.
$$
In the same manner as in the proof of Lemma~\ref{lemma:in2}, 
there exists positive constants $M_{4, 1}$ and $M_{4, 2}$ such that
\begin{align*}
I_{4,1} & \leq M_{4,1} |t_{2}- t_{1}|, \\
I_{4,2} & \leq M_{4,2} |t_{2}- t_{1}|.
\end{align*}
We have already proved in \eqref{811}, for some positive constant $M_{5}$, that
\begin{align*}
I_{5} & \leq M_{5} |t_{2}- t_{1}|^{2\alpha}.
\end{align*}
Therefore, we conclude that all $I_{i}$, $i=3, 4, 5$, converge to zero 
when $t_{2} \rightarrow t_{1}$. Thus, from Cauchy's convergence criterion, 
it yields that $\lim_{t\to \beta ^{-}}u(t)=u^{\ast }$.
This finishes the proof of Lemma~\ref{insTh2:lem1}. 
\end{proof}
The second step of the proof of Theorem~\ref{thm4.1} 
consists to show the following result.
\begin{lemma}
\label{insTh2:lem2}
Function $u(t)$ is continuable.
\end{lemma}
\begin{proof}
As $S$ is a closed subset, we can say that $(\beta ,u^{\ast })\in S$.
Define $u(\beta )=u^{\ast }$. Hence, $u(t)\in C[0,\beta ]$. 
Then we define the operator $K$ by
\begin{equation*}
(Kv)(t)=u_1+\frac{\lambda}{\Gamma (2 \alpha )}\int_{\beta }^{t}(t-s)^{2\alpha
-1} \frac{ f(s, v(s))}{\left( \int_{0}^{t}f(x, v)\, dx\right)^{2}} ds,
\end{equation*}
where
\begin{equation*}
u_1=u_0+\frac{\lambda}{\Gamma (2\alpha )}\int_0^{\beta }(t-s)^{2\alpha-1} 
\frac{ f(s, v(s))}{\left( \int_{0}^{t}f(x, v)\, d x\right)^{2}} ds, \quad 
v \in C([\beta ,\beta +1]),\quad  t\in [ \beta ,\beta+1].
\end{equation*}
Set
\begin{equation*}
E_{b}=\left\{ (t,v):\beta \leq t\leq \beta +1,| v| 
\leq \max_{\beta \leq t\leq \beta +1}| u_1(t)| +b\right\}
\end{equation*}
and
\begin{equation*}
E_{h}=\left\{ v\in C[\beta ,\beta +1]:\max_{t\in [ \beta ,\beta+h]}| v(t)-u_1(t)| 
\leq b,v(\beta )=u_1(\beta)\right\},
\end{equation*}
where $h=\min\left\{ \left (b \left(\frac{ \lambda M}{\Gamma(2 \alpha +1 ) 
c_{1}^{2}} \right )^{-1}\right)^{\frac{1}{2 \alpha}},1\right\}$. 
Analogously to the proof of Theorem~\ref{thm3.1}, we prove that $K$ 
is completely continuous on $E_{b}$. Indeed, let $\{v_n\}\subseteq C[\beta ,\beta +h]$.
Then $\| v_n-v\| _{[\beta ,\beta +h]}\to 0$ as $n\to +\infty $ and  
similar arguments to the one above for \eqref{eq7}, allow us to declare 
that there exists a positive constant $c_{h}$ depending on $h$ such that
\begin{equation*}
\|Kv_{n} - Kv\|_{C[\beta, \beta + h]} 
\leq  c_{h} \| v_{n}-v \|_{C[\beta, \beta +h]}.
\end{equation*}
Hence,
$\left\| (Kv_n)(t)-(Kv)(t)\right\|_{_{[\beta ,\beta+h]}}\to 0$ as $n\to +\infty$,
which yields that operator $K$ is continuous. We show that $KE_{h}$ is equicontinuous. 
For all $v\in E_{h}$, we have $(Kv)(\beta )=u_1(\beta )$ and, in view of the choice of $h$, 
it follows from hypotheses $(H_1)$ and $(H_2)$ that
\begin{align*}
\left|(Kv)(t)-u_1\right|
&= \left| \frac{\lambda}{\Gamma (2\alpha )}
\int_{\beta }^{t}(t-s)^{2\alpha -1}
\frac{ f(s, v(s))}{\left( \int_{0}^{t}f(x, v)\, dx\right)^{2}} ds\right| \\
&\leq  \frac{\lambda M}{\Gamma (2\alpha +1) c_{1}^{2}} h^{2\alpha}\leq b.
\end{align*}
Therefore, we get that $KE_{h}\subset E_{h}$. Furthermore, for any $v\in E_{h}$ 
and $\beta \leq t_1\leq t_2\leq \beta +h$, we have
$$
(Kv)(t_1)-(Kv)(t_2) = I_{6}+ I_{7}.
$$
By an analogous early calculation, there exists a positive constant $M_{6}$ such that
\begin{equation}
\label{13}
\begin{aligned}
I_{6}
& = \frac{\lambda}{\Gamma(2\alpha)} 
\int_{0}^{\beta} (t_{2}-s)^{2\alpha -1} 
\frac{ f(s, v(s))}{\left( \int_{0}^{t_{2}}f(x, v)\, d x\right)^{2}} ds
-\frac{\lambda}{\Gamma(2\alpha)} \int_{0}^{\beta} (t_{1}-s)^{2\alpha -1} 
\frac{ f(s, v(s))}{\left( \int_{0}^{t_{1}}f(x, v)\, d x\right)^{2}} ds\\
& \leq \frac{\lambda}{\Gamma(2\alpha)} \left| 
\int_{0}^{\beta} \left ( (t_{1}-s)^{2\alpha -1} - (t_{2}-s)^{2\alpha -1} \right)  
\frac{ f(s, v(s))}{\left( \int_{0}^{t_{1}}f(x, v)\, d x\right)^{2}} ds\right. \\
& \left. \quad + \frac{\lambda}{\Gamma(2\alpha)} \int_{0}^{\beta}  (t_{2}-s)^{2\alpha -1}
\left( \frac{ f(s, v(s))}{\left( \int_{0}^{t_{1}}f(x, v)\, d x\right)^{2}} ds 
-  \frac{ f(s, v(s))}{\left( \int_{0}^{t_{2}}f(x, v)\, d x\right)^{2}} ds \right) \right|\\
& \leq  M_{6} \left\{ \left |(t_{2} - \beta)^{2\alpha} 
- (t_{1} - \beta)^{2\alpha}+ t_{1}^{2\alpha} - t_{2}^{2\alpha}\right| 
+  |t_{2}-t_{1}| \right\}.
\end{aligned}
\end{equation}
An analogous treatment as in \eqref{8}--\eqref{811} yields the existence 
of a positive constant $M_{7}$ such that
\begin{equation}
\label{14}
\begin{aligned}
I_{7}
& = \frac{\lambda}{\Gamma(2\alpha)} \int_{\beta}^{t_{2}} (t_{2}-s)^{2\alpha -1} 
\frac{ f(s, v(s))}{\left( \int_{0}^{t_{2}}f(x, v)\, d x\right)^{2}} ds
-\frac{\lambda}{\Gamma(2\alpha)} \int_{\beta}^{t_{1}} (t_{1}-s)^{2\alpha -1} 
\frac{ f(s, v(s))}{\left( \int_{0}^{t_{1}}f(x, v)\, d x\right)^{2}} ds\\
& \leq M _{7}\left \{ \left | (t_{1}-\beta)^{2\alpha} 
-(t_{2}-\beta)^{2\alpha}\right|+ |t_{2}-t_{1}| + 2|t_{2} - t_{1}|^{2\alpha}\right\}.
\end{aligned} 
\end{equation}
Since the right side of inequalities \eqref{13} and \eqref{14} go to zero 
as $t_{2} \rightarrow t_{1}$, we deduce that $\{ (Kv)(t):v\in E_{h}\}$ is
equicontinuous. Consequently, $K$ is completely continuous. Then, Schauder's fixed
point theorem can be applied to obtain that operator $K$ has a fixed point 
$\tilde{u}(t)\in E_{h}$. On other words, we have
\begin{equation*} 
\begin{aligned}
\tilde{u}(t)
&= u_1+\frac{\lambda}{\Gamma (2\alpha )}\int_{\beta}^{t}
(t-s)^{2\alpha -1} \frac{ f(s, \tilde{u}(s))}{\left( 
\int_{0}^{t}f(x, \tilde{u}(x))\, dx\right)^{2}} ds,\\
&= u_0+\frac{\lambda}{\Gamma (2\alpha )}
\int_0^{t}(t-s)^{2\alpha -1} \frac{ f(s, \tilde{u}(s))}{\left( 
\int_{0}^{t}f(x, \tilde{u}(x))\, dx\right)^{2}} ds,\quad t\in [ \beta ,\beta +h],
\end{aligned}
\end{equation*}
where
\[
\tilde{u}(t)=\begin{cases}
u(t), & t\in (0,\beta ] \\
\tilde{u}(t), & t\in [ \beta ,\beta +h].
\end{cases}
\]
It follows that $\tilde{u}(t)\in C[0,\beta +h]$ and
\begin{equation*}
\tilde{u}(t)=u_0+\frac{\lambda}{\Gamma (2\alpha )}
\int_0^{t}(t-s)^{2\alpha-1}\frac{ f(s, \tilde{u}(s))}{\left( 
\int_{0}^{t}f(x, \tilde{u}(x))\, dx\right)^{2}} ds.  
\end{equation*}
Therefore, according to Lemma~\ref{lem2.1}, $\tilde{u}(t)$ is a solution of
\eqref{1} on $(0,\beta +h]$. This is absurd because $u(t)$ is
noncontinuable. This completes the proof of Lemma~\ref{insTh2:lem2}.
\end{proof}
Theorem~\ref{thm4.1} follows from Lemmas~\ref{insTh2:lem1}
and \ref{insTh2:lem2}.
\end{proof}

\begin{remark} 
\label{rmk4.1}
Uniqueness of solution to problem \eqref{1}  
is easily derived from the proof of Theorem~\ref{thm4.1}
for a well chosen $\lambda$.
\end{remark}


\section{Global existence}
\label{section5}

Now we provide two sets of sufficient conditions
for the existence of a global solution for \eqref{1}
(Theorems~\ref{thm5.2} and \ref{thm5.2b}). 
We begin with an auxiliary lemma. 

\begin{lemma} 
\label{thm5.1}
Suppose that conditions $(H_1)$--$(H_3)$ hold. Let $u(t)$ be a solution
of \eqref{1} on $(0,\beta )$. If $u(t)$ is bounded on $[\tau ,\beta )$ 
for some $\tau >0$, then $\beta =+\infty$.
\end{lemma}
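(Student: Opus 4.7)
The plan is to argue by contradiction via Theorem~\ref{thm4.1}. Assuming $\beta<+\infty$ and that $(0,\beta)$ is the maximal interval of existence (so $u$ is noncontinuable), Theorem~\ref{thm4.1} furnishes some $\eta\in(0,\beta/2)$ with the property that for every bounded closed $S\subset[\eta,+\infty)\times\mathbb{R}$ there is a $t^{\ast}\in[\eta,\beta)$ with $(t^{\ast},u(t^{\ast}))\notin S$. The strategy is to exhibit one single bounded closed $S$ that contains the entire graph $\{(t,u(t)):t\in[\eta,\beta)\}$, which contradicts this property and forces $\beta=+\infty$.

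The key step is to show that $u$ is bounded on $[\eta,\beta)$. Boundedness on $[\tau,\beta)$ is the hypothesis, so the only concern is the sub-interval $[\eta,\tau]$ in the event that $\eta<\tau$. Since $u$ is continuous on $(0,\beta)$ (it is given by the integral equation \eqref{2} of Lemma~\ref{lem2.1}) and $[\eta,\tau]$ is compact, $u$ is automatically bounded on $[\eta,\tau]$. Combining with the hypothesized bound on $[\tau,\beta)$ yields a constant $K>0$ such that $|u(t)|\le K$ for every $t\in[\eta,\beta)$.

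With that in hand, I would take
\[
S=[\eta,\beta]\times[-K,K],
\]
which is manifestly a bounded closed subset of $[\eta,+\infty)\times\mathbb{R}$, and satisfies $(t,u(t))\in S$ for all $t\in[\eta,\beta)$. No $t^{\ast}\in[\eta,\beta)$ with $(t^{\ast},u(t^{\ast}))\notin S$ can exist, directly contradicting Theorem~\ref{thm4.1}. Hence $\beta=+\infty$.

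The main obstacle is mostly a matter of bookkeeping: the value of $\eta$ is supplied by Theorem~\ref{thm4.1} and cannot be chosen freely, so I must ensure that the assumed boundedness on $[\tau,\beta)$ can be upgraded to boundedness on $[\eta,\beta)$ irrespective of how $\eta$ and $\tau$ compare. Using continuity of $u$ on $(0,\beta)$ to absorb the piece $[\eta,\tau]$ is what makes the argument work; beyond this observation, the lemma is essentially a direct dualization of Theorem~\ref{thm4.1}.
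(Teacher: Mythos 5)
Your proof is correct and is exactly the argument the paper intends: its own ``proof'' of this lemma is the single line ``Follows immediately from the results of Section~\ref{section4},'' i.e.\ an appeal to Theorem~\ref{thm4.1}, which you have simply spelled out. Your care in upgrading boundedness from $[\tau,\beta)$ to $[\eta,\beta)$ via continuity on the compact piece $[\eta,\tau]$, and your explicit reading of the hypothesis as requiring $(0,\beta)$ to be the maximal interval, supply details the paper leaves implicit.
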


\begin{proof}
Follows immediately from the results of Section~\ref{section4}.
\end{proof}

\begin{theorem} 
\label{thm5.2}
Suppose that conditions $(H_1)$--$(H_3)$ hold.
Then \eqref{1} has a solution in $C([0,+\infty ))$.
\end{theorem}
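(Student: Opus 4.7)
The plan is to argue by contradiction, stitching together the local existence result, the continuation theorem of Section~\ref{section4}, and a straightforward a priori estimate drawn from the integral representation~\eqref{2}. By Theorem~\ref{thm3.1}, problem~\eqref{1} admits a local solution $u\in C[0,h]$ for some $h>0$; using Theorem~\ref{thm4.1} one extends $u$ to a maximal interval of existence $[0,\beta)$, on which $u$ is noncontinuable. The goal is to prove $\beta=+\infty$, and by Lemma~\ref{thm5.1} it suffices to exhibit some $\tau>0$ such that $u$ is uniformly bounded on $[\tau,\beta)$.

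The crux of the argument is a direct a priori bound produced from~\eqref{2}. Using hypothesis $(H_1)$, namely $f(s,u(s))\leq c_2$ and $\int_0^t f(x,u(x))\,dx\geq c_1 t$, for every $t\in[\tau,\beta)$ one obtains
\begin{equation*}
|u(t)|\leq |u_0|+\frac{\lambda}{\Gamma(2\alpha)}\int_0^t (t-s)^{2\alpha-1}\frac{c_2}{(c_1 t)^2}\,ds
=|u_0|+\frac{\lambda c_2}{c_1^{2}\,\Gamma(2\alpha+1)}\,t^{2\alpha-2}.
\end{equation*}
Since $0<\alpha<\tfrac{1}{2}$, the exponent $2\alpha-2$ is negative, so the right-hand side is a decreasing function of $t$ and is therefore majorised on $[\tau,\beta)$ by the constant $|u_0|+\frac{\lambda c_2}{c_1^{2}\,\Gamma(2\alpha+1)}\,\tau^{2\alpha-2}$, independently of $\beta$.

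With this uniform bound in hand, Lemma~\ref{thm5.1} forces $\beta=+\infty$, and hence $u\in C([0,+\infty))$ as required. The main subtlety to be handled is the singularity of the integrand at $t=0$ coming from the denominator $\bigl(\int_0^t f(x,u)\,dx\bigr)^{2}$: this is precisely why the estimate cannot be made uniform on the whole of $[0,\beta)$ and must be localised to $[\tau,\beta)$, with the piece $[0,\tau]$ absorbed into the local solution furnished by Theorem~\ref{thm3.1}. Notably, no Gronwall-type inequality is needed here, in contrast with the companion Theorem~\ref{thm5.2b}: the two-sided control on $f$ afforded by $(H_1)$ is strong enough to close the estimate directly.
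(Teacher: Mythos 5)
Your proposal is correct, and it reaches the conclusion by a genuinely more direct route than the paper. The overall skeleton is the same: local existence via Theorem~\ref{thm3.1}, passage to a noncontinuable solution on a maximal interval $(0,\beta)$, a uniform bound on $[\tau,\beta)$, and then Lemma~\ref{thm5.1} to force $\beta=+\infty$. The difference lies entirely in how the a priori bound is obtained. The paper estimates
\begin{equation*}
|u(t)|\leq |u_0|+\frac{\lambda}{\Gamma(2\alpha)c_1^{2}}\int_0^{t}\frac{|u(s)|}{(t-s)^{1-2\alpha}}\,ds
\end{equation*}
and then closes the loop with the generalized Gronwall inequality (Lemma~\ref{lem5.1}, miscited there as Lemma~\ref{lem2.1}); note that the intermediate step replacing $f(s,u(s))/t^{2}$ by a multiple of $|u(s)|$ is not transparently justified by $(H_1)$--$(H_3)$ alone. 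You instead exploit the two-sided bound $c_1\leq f\leq c_2$ from $(H_1)$ to get the explicit, Gronwall-free estimate $|u(t)|\leq |u_0|+\frac{\lambda c_2}{c_1^{2}\Gamma(2\alpha+1)}\,t^{2\alpha-2}$, which is monotone decreasing on $[\tau,\beta)$ and hence uniformly bounded there. This is cleaner, avoids the questionable linearization of $f$ in $u$, and correctly isolates the only genuine difficulty — the singularity of the nonlocal denominator at $t=0$ — by restricting to $t\geq\tau$ and leaving the behaviour near the origin to the local theory, where $(H_2)$ does the work. The one cosmetic caveat is your phrase that Theorem~\ref{thm4.1} ``extends $u$ to a maximal interval'': the existence of a noncontinuable extension is a standard separate fact (the paper simply posits it), while Theorem~\ref{thm4.1} characterizes noncontinuability; this does not affect the validity of your argument.
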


\begin{proof}
The existence of a local solution $u(t)$ of \eqref{1} is ensured thanks to
Theorem~\ref{thm3.1}. We already know, by Lemma~\ref{lem2.1}, 
that $u(t)$ is a also a solution to the integral equation
\begin{equation*}
u(t)=u_0+\frac{\lambda}{\Gamma (2\alpha )}\int_0^{t}(t-s)^{2\alpha
-1}\frac{ f(s, u(s))}{\left( \int_{0}^{t}f(x, u(x))\, dx\right)^{2}}ds.
\end{equation*}
Suppose that the existing interval of the noncontinuable solution $u(t)$ is
$(0,\beta)$, $\beta<+\infty$. Then,
\begin{align*}
|u(t)|
&= \left| u_0+\frac{\lambda}{\Gamma (2\alpha)}
\int_0^{t}(t-s)^{2\alpha -1}\frac{ f(s, u(s))}{\left( 
\int_{0}^{t}f(x, u(x))\, dx\right)^{2}}ds\right| \\
& \leq |u_0|+ \frac{\lambda}{\Gamma (2\alpha )} \frac{1}{(c_{1}t)^{2}} 
\int_0^{t}(t-s)^{2\alpha -1}\left| f(s, u(s))\right|ds\\
&\leq |u_0| + \frac{\lambda }{\Gamma (2\alpha )} 
\frac{1}{c_{1}^{2}} \int_0^{t} \frac{|u(s)|}{(t-s)^{1- 2\alpha}} ds.
\end{align*}
By Lemma~\ref{lem2.1}, there exists a constant $k(\alpha)$ such that, 
for $t\in (0, \beta)$, we have
\begin{align*}
| u(t)| & \leq |u_0| + k |u_0|\frac{\lambda }{\Gamma (2\alpha )} 
\frac{1}{c_{1}^{2}} \int_0^{t} (t-s)^{2\alpha-1} ds,
\end{align*}
which is bounded on $(0, \beta)$. Thus, by Lemma~\ref{thm5.1}, 
problem \eqref{1} has a solution $u(t)$ on $(0,+\infty )$.
\end{proof}

Next we give another sufficient condition ensuring 
global existence for \eqref{1}.

\begin{theorem} 
\label{thm5.2b}
Suppose that there exist positive constants
$c_{3}$, $c_{4}$ and  $c_{5}$ such that
$c_{3} \leq |f(s, x)|\leq c_{4}|x|+ c_{5}$.
Then \eqref{1} has a solution in $C([0,+\infty ))$.
\end{theorem}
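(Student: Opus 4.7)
The plan is to follow the template of Theorem~\ref{thm5.2}: produce a local solution via Theorem~\ref{thm3.1}, assume by contradiction that its maximal interval $(0,\beta)$ has $\beta<+\infty$, derive an a priori bound on $[\tau,\beta)$ for some $\tau>0$, and appeal to Lemma~\ref{thm5.1} to conclude that $\beta=+\infty$. The new ingredient compared to Theorem~\ref{thm5.2} is that, without the quadratic-in-$s$ growth $(H_2)$, the numerator $|f(s,u(s))|$ in the integral representation of Lemma~\ref{lem2.1} must be handled by the linear growth $c_4|u|+c_5$, while the nonlocal denominator is controlled from below by $\int_0^t f(x,u(x))\,dx\geq c_3 t$. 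The generalized Gronwall inequality (Lemma~\ref{lem5.1}) then closes the argument.

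Concretely, I would fix some $\tau\in(0,h]$, with $h$ the length of the local existence interval supplied by Theorem~\ref{thm3.1}, and estimate, for $t\in[\tau,\beta)$,
\begin{equation*}
|u(t)| \leq |u_0| + \frac{\lambda}{\Gamma(2\alpha)(c_3\tau)^2}\int_0^t (t-s)^{2\alpha-1}\bigl(c_4|u(s)|+c_5\bigr)\,ds.
\end{equation*}
Splitting the integral and absorbing the $c_5$-term together with the contribution of $|u|$ on $[0,\tau]$ (which is bounded by continuity) into a single constant $W=W(\tau,\beta)$ yields an inequality of the shape
\begin{equation*}
|u(t)| \leq W + a\int_0^t \frac{|u(s)|}{(t-s)^{1-2\alpha}}\,ds,
\end{equation*}
with $a=\lambda c_4/[\Gamma(2\alpha)(c_3\tau)^2]$. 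Because $0<\alpha<\tfrac12$ gives $0<1-2\alpha<1$, Lemma~\ref{lem5.1} applies and produces a bound of the form $|u(t)|\leq W + kaW\beta^{2\alpha}/(2\alpha)$ that is independent of $t\in[\tau,\beta)$. Lemma~\ref{thm5.1} then forces $\beta=+\infty$, contradicting the assumption $\beta<+\infty$.

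The main obstacle, and the only real difference with the argument in Theorem~\ref{thm5.2}, is the factor $1/t^2$ produced when the nonlocal denominator is bounded from below by $(c_3 t)^2$: it is not integrable at the origin and prevents a direct Gronwall argument on $(0,\beta)$. I would circumvent this exactly as the continuation framework of Section~\ref{section4} suggests, by working on $[\tau,\beta)$ with $\tau>0$, where $1/t^2\leq 1/\tau^2$ becomes a harmless constant; Lemma~\ref{thm5.1} guarantees that boundedness of $u$ away from the origin already suffices to prevent blow-up in finite time.
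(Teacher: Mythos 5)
Your proposal is correct and follows essentially the same route as the paper: bound the nonlocal denominator below by $(c_3\tau)^2$ on $[\tau,\beta)$, use the linear growth $c_4|u|+c_5$ in the numerator, apply the generalized Gronwall inequality (Lemma~\ref{lem5.1}) to get an a priori bound, and invoke Lemma~\ref{thm5.1} to force $\beta=+\infty$. If anything, you are slightly more careful than the paper in handling the non-integrable $1/t^{2}$ factor near the origin and in using the theorem's own constant $c_3$ (rather than $c_1$ from $(H_1)$) for the lower bound.
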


\begin{proof}
Suppose that the maximum existing interval of $u(t)$ is
$(0,\beta )$, $\beta<+\infty$. We claim that $u(t)$ 
is bounded on $[\tau ,\beta )$ for any $\tau \in (0,\beta )$. 
Indeed, we have
\begin{align*}
|u(t)|
&= \left| u_0+\frac{\lambda}{\Gamma (2\alpha )}
\int_0^{t}(t-s)^{2\alpha -1}\frac{ f(s, u(s))}{\left( 
\int_{0}^{t}f(x, u(x))\, dx\right)^{2}}ds\right| \\
& \leq |u_0|+ \frac{\lambda}{\Gamma (2\alpha )} 
\frac{1}{(c_{1}\tau)^{2}} \int_0^{t}(t-s)^{2\alpha -1}\left| f(s, u(s))\right|ds\\
&\leq |u_0|+\frac{\lambda}{\Gamma (2\alpha )} 
\frac{c_{3}}{(c_{1}\tau)^{2}}\int_0^{t}(t-s)^{2\alpha -1} ds 
+ \frac{\lambda }{\Gamma (2\alpha )} \frac{c_{2}}{(c_{1}\tau)^{2}}
\int_0^{t}  \frac{|u(s)|}{(t-s)^{1- 2\alpha}} ds.
\end{align*}
If we take 
$$
w(t)=|u_0|+ \frac{\lambda}{\Gamma (2\alpha )} 
\frac{c_{3}}{(c_{1}\tau)^{2}}\int_0^{t}(t-s)^{2\alpha -1}ds,
$$ 
which is bounded, and
$$
a=\frac{\lambda c_{2}}{\Gamma (2\alpha )} \frac{1}{(c_{1}\beta)^{2}},
$$
it follows,  according with Lemma~\ref{lem5.1}, that
$v(t)=|u(t)|$ is bounded. Thus, by Lemma~\ref{thm5.1}, 
\eqref{1} has a solution $u(t)$ on $(0,+\infty )$.
\end{proof}


\section{Conclusion}
\label{section6}

In our paper we consider a prototype of electrical 
conductivity that depends strongly in both time 
and temperature. The model relates to modern developments 
of thermistors, where fractional PDEs have a crucial role.
It turns out that available computational methods are not 
theoretically sound in the sense they rely on results 
of local existence. The main novelty of our paper is that
we prove global existence for a nonlocal 
thermistor problem with fractional differentiation 
in the Caputo sense. Moreover, we extend some 
results of continuation and global existence to 
the fractional order initial value thermistor problem.
The proofs rely on Schauder's fixed point theorem.
We trust our results will have a positive impact 
on the development of computer mathematics
to fractional thermistor devices.


\section*{Acknowledgments}

The authors were supported by the \emph{Center for Research
and Development in Mathematics and Applications} (CIDMA)
of University of Aveiro, through Funda\c{c}\~ao
para a Ci\^encia e a Tecnologia (FCT),
within project UID/MAT/04106/2013.
They are grateful to two anonymous referees, 
for several comments and suggestions of improvement.


\section*{Competing interests} 

The authors declare that they have no competing interests.

\small




\begin{thebibliography}{00}

\bibitem{book:frac:ICP2}
R. Almeida, S. Pooseh and D. F. M. Torres,
{\it Computational methods in the fractional calculus of variations},
Imperial College Press, London, 2015.

\bibitem{2} 
A. Arara, M. Benchohra, N. Hamidi\ and\ J. J. Nieto,
Fractional order differential equations on an unbounded domain, 
Nonlinear Anal. {\bf 72} (2010), no.~2, 580--586. 

\bibitem{11} 
J.-P. Aubin, J. Lygeros, M. Quincampoix, S. Sastry\ and\ N. Seube, 
Impulse differential inclusions: a viability approach to hybrid systems, 
IEEE Trans. Automat. Control {\bf 47} (2002), no.~1, 2--20. 

\bibitem{MR3561379}
D. Baleanu, K. Diethelm, E. Scalas\ and\ J. J. Trujillo,   
{\it Fractional calculus},
Series on Complexity, Nonlinearity and Chaos, 5, 
World Scientific Publishing Co. Pte. Ltd., Hackensack, NJ, 2017.

\bibitem{Ref:1:2}
D. Baleanu, A. K. Golmankhaneh, A. K. Golmankhaneh\ and\ R. R. Nigmatullin,
Newtonian law with memory,
Nonlinear Dynam. {\bf 60} (2010), no.~1--2, 81--86.

\bibitem{Ref:1:1} 
D. Baleanu, A. K. Golmankhaneh, R. Nigmatullin\ and\ A. K. Golmankhaneh,
Fractional Newtonian mechanics,
Cent. Eur. J. Phys. {\bf 8} (2010), no.~1, 120--125.

\bibitem{66} 
M. Caputo, 
Linear models of dissipation whose $Q$ is almost frequency independent. II, 
Geophys. J. R. Astr. Soc. {\bf 13} (1967), no.~5, 529--539. 

\bibitem{65} 
M. Caputo, 
{\it Elasticit\`a e dissipazione}, 
Zanichelli, Bologna, 1969.

\bibitem{67} 
M. Caputo\ and\ F. Mainardi, 
Linear models of dissipation in anelastic solids, 
Riv. Nuovo Cimento (Ser. II) {\bf 1} (1971) 161--198.

\bibitem{15} 
J. Henderson\ and\ A. Ouahab, 
Impulsive differential inclusions with fractional order, 
Comput. Math. Appl. {\bf 59} (2010), no.~3, 1191--1226. 

\bibitem{16} 
R. Hilfer,
{\it Applications of fractional calculus in physics}, 
World Scientific Publishing Co., Inc., River Edge, NJ, 2000. 

\bibitem{17} 
A. A. Kilbas, H. M. Srivastava\ and\ J. J. Trujillo, 
{\it Theory and applications of fractional differential equations}, 
North-Holland Mathematics Studies, 204, Elsevier Science B.V., Amsterdam, 2006. 

\bibitem{40} 
C. Kou, H. Zhou\ and\ C. Li, 
Existence and continuation theorems of Riemann-Liouville type fractional differential equations, 
Internat. J. Bifur. Chaos Appl. Sci. Engrg. {\bf 22} (2012), no.~4, 1250077, 12~pp. 

\bibitem{Ref:2:2}
D. Kumar, J. Singh\ and\ D. Baleanu,
A new analysis for fractional model of regularized 
long-wave equation arising in ion acoustic plasma waves, 
Math. Meth. Appl. Sci. {\bf 40} (2017), 5642--5653.

\bibitem{kwok}
K. Kwok,
{\it Complete guide to semiconductor devices},
McGraw-Hill, New york, 1995.

\bibitem{22} 
V. Lakshmikantham, S. Leela\ and\ J. Vasundhara Devi, 
\emph{Theory of fractional dynamic systems}, 
Cambridge Scientific Publishers, Cambridge, 2009.

\bibitem{25} 
Y. Li, Y. Chen\ and\ I. Podlubny, 
Mittag-Leffler stability of fractional order nonlinear dynamic systems, 
Automatica J. IFAC {\bf 45} (2009), no.~8, 1965--1969.

\bibitem{23} 
C. Li\ and\ W. Deng, 
Remarks on fractional derivatives, 
Appl. Math. Comput. {\bf 187} (2007), no.~2, 777--784. 

\bibitem{li} 
C. Li\ and\ S. Sarwar, 
Existence and continuation of solutions for Caputo 
type fractional differential equations, 
Electron. J. Differential Equations {\bf 2016} (2016), 
Paper No.~207, 14~pp. 

\bibitem{41} 
C. P. Li\ and\ F. H. Zeng, 
\emph{Numerical methods for fractional differential calculus},
Chapman and Hall/CRC, Boca Raton, USA, 2015.

\bibitem{MR2736622}
J. T. Machado, V. Kiryakova\ and\ F. Mainardi, 
Recent history of fractional calculus, 
Commun. Nonlinear Sci. Numer. Simul. {\bf 16} (2011), no.~3, 1140--1153. 

\bibitem{maclen}
E. D. Maclen,
{\it Thermistors},
Electrochemical publication, Glasgow, 1979.

\bibitem{24} 
F. Mainardi, 
Fractional calculus: some basic problems in continuum and statistical mechanics, 
in {\it Fractals and fractional calculus in continuum mechanics (Udine, 1996)}, 291--348, 
CISM Courses and Lect., 378, Springer, Vienna. 

\bibitem{29} 
K. S. Miller\ and\ B. Ross, 
{\it An introduction to the fractional calculus and fractional differential equations}, 
A Wiley-Interscience Publication, John Wiley \& Sons, Inc., New York, 1993.

\bibitem{MR1101141}
K. Nishimoto, 
{\it An essence of Nishimoto's fractional calculus}, 
Descartes Press Co., Koriyama, 1991. 

\bibitem{MR2768178}
M. D. Ortigueira, 
{\it Fractional calculus for scientists and engineers}, 
Lecture Notes in Electrical Engineering, 84, Springer, Dordrecht, 2011. 

\bibitem{28} 
I. Podlubny, 
{\it Fractional differential equations}, 
Mathematics in Science and Engineering, 198, 
Academic Press, Inc., San Diego, CA, 1999. 

\bibitem{29bis} 
S. G. Samko, A. A. Kilbas\ and\ O. I. Marichev, 
{\it Fractional integrals and derivatives}, 
translated from the 1987 Russian original, 
Gordon and Breach Science Publishers, Yverdon, 1993. 

\bibitem{MathCompSim}
M. R. Sidi Ammi\ and\ D. F. M. Torres,
Numerical analysis of a nonlocal parabolic problem resulting from thermistor problem,
Math. Comput. Simul. {\bf 77} (2008), 291--300.
{\tt arXiv:0709.0129}

\bibitem{MyID:211}
M. R. Sidi Ammi\ and\ D. F. M. Torres,
Optimal control of nonlocal thermistor equations,
Internat. J. Control {\bf 85} (2012), no.~11, 1789--1801.
{\tt arXiv:1206.2873}

\bibitem{sidiammi1}
M. R. Sidi Ammi\ and\ D. F. M. Torres,
Existence and uniqueness of a positive solution to generalized
nonlocal thermistor problems with fractional-order derivatives,
Differ. Equ. Appl. {\bf 4} (2012), no.~2, 267--276.
{\tt arXiv:1110.4922}

\bibitem{MyID:347}
M. R. Sidi Ammi\ and\ D. F. M. Torres,
Galerkin spectral method for the fractional nonlocal thermistor problem,
Comput. Math. Appl. {\bf 73} (2017), no.~6, 1077--1086.
{\tt arXiv:1605.07804}

\bibitem{MyID:365}
M. R. Sidi Ammi\ and\ D. F. M. Torres,
Existence and uniqueness results for a fractional 
Riemann-Liouville nonlocal thermistor problem on arbitrary time scales, 
J. King Saud Univ. Sci., in press. DOI:10.1016/j.jksus.2017.03.004
{\tt arXiv:1703.05439}

\bibitem{Ref:2:1}
J. Singh, D. Kumar, M. Al Qurashi\ and\ D. Baleanu,
A new fractional model for giving up smoking dynamics, 
Adv. Difference Equ. {\bf 2017} (2017), Paper No.~88, 16~pp.

\bibitem{Ref:2:3}
J. Singh, D. Kumar, M. Al Qurashi\ and\ D. Baleanu,
A novel numerical approach for a nonlinear fractional 
dynamical model of interpersonal and romantic relationships, 
Entropy {\bf 19} (2017), no.~7, Paper No.~375, 17~pp.

\bibitem{Ref:2:4}
H. M. Srivastava, D. Kumar\ and\ J. Singh, 
An efficient analytical technique for fractional model of vibration equation, 
Appl. Math. Model. {\bf 45} (2017), 192--204.

\bibitem{35} 
H. Ye, J. Gao\ and\ Y. Ding, 
A generalized Gronwall inequality and its application to a fractional differential equation, 
J. Math. Anal. Appl. {\bf 328} (2007), no.~2, 1075--1081. 

\end{thebibliography}
\end{document}